\newtheoremstyle{mytheoremstyle} % name
    {5pt}                    % Space above
    {5pt}                    % Space below
    {\itshape}                   % Body font
    {\parindent}                           % Indent amount (empty = no indent, \parindent = para indent)
    {\bf}                   % Theorem head font
    {.}                          % Punctuation after theorem head
    {.5em}                       % Space after thm head: " " = normal interword space; \newline = linebreak
    {}  % Theorem head spec (can be left empty, meaning ‘normal’)
\theoremstyle{mytheoremstyle}
\newtheorem{theorem}{Theorem}[section]
\newtheorem{lemm}[theorem]{Lemma}
\newtheorem{prop}[theorem]{Proposition}
\newtheorem{coro}[theorem]{Corollary}
\newtheoremstyle{mytdefintionstyle} % name
    {5pt}                    % Space above
    {5pt}                    % Space below
    {\rm}                   % Body font
    {\parindent}                           % Indent amount
    {\bf}                   % Theorem head font
    {.}                          % Punctuation after theorem head
    {.5em}                       % Space after theorem head
    {}  % Theorem head spec (can be left empty, meaning ‘normal’)
\theoremstyle{remark}
\newtheorem{rmrk}[theorem]{Remark}
\theoremstyle{mytdefintionstyle}
\newtheorem{defn}[theorem]{Definition}
\newtheoremstyle{exmp_contd} 
{\topsep} {\topsep}% 
{\upshape}% Body font 
{}% Indent amount (empty = no indent, \parindent = para indent) 
{\bfseries}% Thm head font
{}% Punctuation after thm head 
{ }% Space after thm head (\newline = linebreak) 
{\thmname{#1}\,\thmnumber{ #2}\thmnote{#3}\enspace(continued)}% Thm head spec 
\theoremstyle{exmp_contd}
\DeclareMathOperator{\Ann}{Ann}
\DeclareMathOperator{\Ass}{Ass}
\DeclareMathOperator{\codim}{codim}
\DeclareMathOperator{\depth}{depth}
\DeclareMathOperator{\Ext}{Ext}
\DeclareMathOperator{\grade}{grade}
\DeclareMathOperator{\Hom}{Hom}
\DeclareMathOperator{\Spec}{Spec}
\DeclareMathOperator{\tor}{t}
\newcommand\A{\mathcal{A}}
\newcommand\F{\mathcal{F}}
\newcommand{\Z}{\mathbb{Z}}
\newcommand{\Q}{\mathbb{Q}}
\newcommand{\R}{\mathbb{R}}
\newcommand{\C}{\mathbb{C}}
\renewcommand{\phi}{\varphi}
\renewcommand{\epsilon}{\varepsilon}
\definecolor{darkgreen}{rgb}{0.008,0.617,0.067}
\definecolor{brown}{rgb}{0.6,0.4,0.2}
\begin{document}

\lstset{basicstyle=\scriptsize\ttfamily,
        frame=single}
        
\author{Mohamed Barakat}
\address{Department of mathematics, University of Kaiserslautern, 67653 Kaiserslautern, Germany}
%\address{Lehrstuhl B f\"ur Mathematik, RWTH-Aachen University, 52062 Germany}
%\address{Lehrstuhl B f\"ur Mathematik, Rheinisch-Westf\"alische Technische Hochschule Aachen, 52062 Germany}
%\email{\href{mailto:Mohamed Barakat <mohamed.barakat@rwth-aachen.de>}{mohamed.barakat@rwth-aachen.de}}
\email{\href{mailto:Mohamed Barakat <barakat@mathematik.uni-kl.de>}{barakat@mathematik.uni-kl.de}}

\title[On subdirect factors of a projective module and applications to system theory]{On subdirect factors of a projective module \\ and applications to system theory}

%\dedicatory{
%\textit{This paper is dedicated to Professor \textsc{Marius van der Put} \\ on the occasion of his 70th birthday.}
%}

\begin{abstract}
  We extend a result of \textsc{Napp Avelli}, \textsc{van der Put}, and \textsc{Rocha} with a system-theoretic interpretation to the noncommutative case:
  Let $P$ be a f.g.~projective module over a two-sided \textsc{Noether}ian domain.
  If $P$ admits a subdirect product structure of the form $P \cong M \times_T N$ over a factor module $T$ of grade at least $2$ then the torsion-free factor of $M$ (resp.\  $N$) is projective.
\end{abstract}

\subjclass[2010]{
  13C10, %Projective and free modules and ideals
  13H10, %Special types (Cohen-Macaulay, Gorenstein, Buchsbaum, etc.)
  18E10, %Exact categories, abelian categories
  18G05, %Projectives and injectives
  18G15, %Ext and Tor, generalizations, Künneth formula
  13P10, %Gröbner bases; other bases for ideals and modules (e.g., Janet and border bases)
  13P20, %Computational homological algebra
  13P25, %Applications of commutative algebra (e.g., to statistics, control theory, optimization, etc.)
  93B05, %Controllability
  93B25, %Algebraic methods
  93B40} %Computational methods
\keywords{subdirect products, torsionless, grade, projective, torsion-free, codimension, free, constructive homological algebra, Abelian category, $\mathtt{homalg}$, system theory}

\maketitle

%\tableofcontents

\section{Introduction}

This paper provides two homologically motivated generalizations of a module-theoretic result proved by \textsc{Napp Avelli}, \textsc{van der Put}, and \textsc{Rocha}. This result was expressed in \cite{NR10} using the dual system-theoretic language and applied to behavioral control.
Their algebraic proof covers at least the polynomial ring $R=k[x_1,\ldots,x_n]$ and the \textsc{Laurent} polynomial ring $R=k[x_1^\pm,\ldots,x_n^\pm]$ over a field $k$.
The corresponding module-theoretic statement is the following:

\begin{theorem} \label{coro:diego}
  Let $R$ be one of the above rings and $M = R^q / A$ a finitely generated \emph{torsion-free} module.
  If there exists a submodule $B \leq R^q$ such that $A \cap B = 0$ and $T := R^q/(A+B)$ is of codimension at least $2$ then $M$ is free.
\end{theorem}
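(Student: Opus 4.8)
The plan is to read the hypotheses as encoding a subdirect product of two quotients of the free module $R^q$, to reduce projectivity to reflexivity by a single application of $\Hom(-,R)$, and to finish with \textsc{Quillen}--\textsc{Suslin}. Write $N := R^q/B$ and observe that $A \cap B = 0$ makes the canonical map $R^q \to M \oplus N$, $x \mapsto (x+A,\,x+B)$, injective with cokernel $T = R^q/(A+B)$; equivalently $R^q \cong M \times_T N$, and there is a short exact sequence
\begin{equation*}
0 \longrightarrow R^q \stackrel{\iota}{\longrightarrow} M \oplus N \longrightarrow T \longrightarrow 0 ,
\end{equation*}
the \textsc{Mayer}--\textsc{Vietoris} sequence of the fibre product. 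Since $R$ is a regular, hence \textsc{Cohen}--\textsc{Macaulay}, \textsc{Noether}ian domain, the grade of a finitely generated module equals its codimension, so $\codim T \geq 2$ gives $\grade T \geq 2$, that is $\Hom(T,R) = 0$ and $\Ext^1(T,R) = 0$.

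Next I would dualize this sequence into $R$. Applying $\Hom(-,R)$ and using $\Hom(T,R) = \Ext^1(T,R) = 0$, the terms flanking $\iota^{\ast}$ vanish, so
\begin{equation*}
\iota^{\ast} \colon M^{\ast} \oplus N^{\ast} \stackrel{\sim}{\longrightarrow} (R^q)^{\ast} \cong R^q
\end{equation*}
is an isomorphism. In particular $M^{\ast} \oplus N^{\ast}$ is free, so $M^{\ast}$ is a direct summand of a free module and hence projective; dualizing the displayed isomorphism once more gives $M^{\ast\ast} \oplus N^{\ast\ast} \cong R^q$, so $M^{\ast\ast}$ is projective as well.

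It remains to identify $M$ with $M^{\ast\ast}$. Here I would exploit naturality of the biduality transformation $\eta \colon \mathrm{id} \Rightarrow (-)^{\ast\ast}$: the naturality square for $\iota$ yields $\eta_{M \oplus N} \circ \iota = \iota^{\ast\ast} \circ \eta_{R^q}$, whose right-hand side is an isomorphism because $R^q$ is reflexive and $\iota^{\ast\ast}$ is the dual of an isomorphism. Thus $\eta_{M \oplus N} = \eta_M \oplus \eta_N$ admits a right inverse and is therefore surjective, which forces both $\eta_M$ and $\eta_N$ to be surjective. Since $M$ is torsion-free over a domain, $\eta_M$ is also injective, so $M \cong M^{\ast\ast}$ is reflexive; being isomorphic to the projective module $M^{\ast\ast}$, it is projective. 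Finally, projective modules over $k[x_1,\ldots,x_n]$ are free by \textsc{Quillen}--\textsc{Suslin}, and over $k[x_1^{\pm},\ldots,x_n^{\pm}]$ by the \textsc{Laurent} analogue due to \textsc{Swan}, so $M$ is free.

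I expect the reflexivity step to be the crux. The first dualization readily delivers projectivity of $M^{\ast}$ and $M^{\ast\ast}$, but concluding that $\eta_M$ is \emph{surjective} is the delicate point, and it is precisely here that the \emph{free} module $R^q$ sitting as the kernel of the \textsc{Mayer}--\textsc{Vietoris} sequence, together with the grade hypothesis on $T$, does the essential work: without freeness of this kernel one would only obtain reflexivity of some auxiliary syzygy, not of $M$ itself. The identity $\codim T = \grade T$ is routine for these regular rings but must be made explicit, since it is the grade — not the codimension directly — that governs the vanishing of $\Hom(T,R)$ and $\Ext^1(T,R)$.
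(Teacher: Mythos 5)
Your proposal is correct, but it takes a genuinely different route from the paper. The paper works covariantly with $\Ext^\bullet(T,-)$: it shows $\Hom(T,M)=0$ (from $M$ torsionless, embedding $M$ into a direct power of $R$) and $\Ext^1(T,R^q)=0$, deduces $\Ext^1(T,A)=0$ from the long exact sequence for $0 \to A \to R^q \to M \to 0$, and then invokes a splitting lemma: $\Ext^1(T,A)=0$ forces the extension $0 \to A \to N \to T \to 0$ to split, which produces a complement $B' \cong M$ of $A$ inside $R^q$; thus $M$ is a direct summand of $R^q$, hence projective, hence free since these rings are FFR and \textsc{Hermite}. You instead dualize contravariantly into $R$: from the \textsc{Mayer}--\textsc{Vietoris} sequence $0 \to R^q \to M \oplus N \to T \to 0$ and the vanishing of $\Hom(T,R)$ and $\Ext^1(T,R)$ you get $M^* \oplus N^* \cong R^q$, and then recover $M$ itself via the naturality square for the biduality transformation, using that $R^q$ is reflexive and $\iota^{**}$ is an isomorphism. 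Both arguments are sound and both ultimately exhibit $M$ as a direct summand of $R^q$. What the paper's route buys is generality: its key step is a statement about fiber products in an arbitrary \textsc{Abel}ian category (no dualization into $R$, no reflexivity of the ambient object needed), which is what allows the noncommutative and torsionless generalizations. What your route buys is concreteness: a short, self-contained commutative-algebra argument that also yields reflexivity of $M$ as a byproduct. One point you should make explicit: the injectivity of $\eta_M$ does not follow from torsion-freeness alone (the paper's Remark on $\Q$ over $\Z$ shows torsion-free $\not\Rightarrow$ torsionless); it requires finite generation, via the classical embedding of a f.g.\ torsion-free module over a commutative domain into a free module of finite rank. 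Since $M$ is f.g.\ here, your use of it is legitimate, but the hypothesis is doing real work at exactly that step --- the paper needs the same fact (its Theorem on torsion-free $=$ torsionless for f.g.\ modules) to pass from its torsionless statement back to the torsion-free formulation.
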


In fact, they prove a more general statement of which the previous is obviously a special case.
However, the special statement implies the more general one.

\begin{theorem}[{\cite[Theorem~18]{NR10}}] \label{thm:diego}
  Let $R$ be one of the above rings and $M = R^q / A$ a finitely generated module.
  If there exists a submodule $B \leq R^q$ such that $A \cap B = 0$ and $T := R^q/(A+B)$ is of codimension at least $2$ then the torsion-free factor $M/ \tor(M)$ of $M$ is free.
\end{theorem}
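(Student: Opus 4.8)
The plan is to prove Theorem~\ref{thm:diego} by reducing it to Theorem~\ref{coro:diego}, exactly as the preceding sentence promises. The key observation is that the torsion-free factor $M/\tor(M)$ is itself a finitely presented module, so I would like to realize it as a quotient $R^q/A'$ where $A' \leq R^q$ is a submodule satisfying the same hypotheses as in Theorem~\ref{coro:diego}, with respect to the \emph{same} complementary submodule $B$. Concretely, let $\pi : R^q \to M = R^q/A$ be the canonical surjection and set $A' := \pi^{-1}(\tor(M))$, the full preimage of the torsion submodule. Then $A \leq A' \leq R^q$ and by the third isomorphism theorem $R^q/A' \cong (R^q/A)/(A'/A) = M/\tor(M)$, so $M/\tor(M)$ is presented as $R^q/A'$ and is torsion-free by construction. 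Thus the first clause of the hypothesis of Theorem~\ref{coro:diego} is met.

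The heart of the reduction is checking that $B$ still works for $A'$, i.e.\ that $A' \cap B = 0$ and that $T' := R^q/(A'+B)$ still has codimension at least $2$. For the intersection, I would argue that $A' \cap B$ is a torsion-free submodule of $R^q$ (being a submodule of the free module $B$-side) which maps \emph{injectively} into $\tor(M)$ under $\pi$ restricted to $B$: since $A \cap B = 0$, the composite $B \hookrightarrow R^q \twoheadrightarrow M$ is injective, and $A' \cap B$ is exactly the part of $B$ landing in $\tor(M)$, hence $A' \cap B \cong$ a torsion submodule inside a torsion-free module $B$, forcing $A' \cap B = 0$. For the codimension claim I would compare the two factor modules $T = R^q/(A+B)$ and $T' = R^q/(A'+B)$. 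Since $A \leq A'$ we have a surjection $T \twoheadrightarrow T'$ with kernel $(A'+B)/(A+B) \cong A'/(A \cdot\text{-part})$, which is a subquotient of $\tor(M)$ and hence has codimension at least $2$ as well; since $T$ also has codimension at least $2$ by hypothesis and codimension is controlled by the codimensions of the sub- and quotient-terms in a short exact sequence, $T'$ inherits codimension at least $2$.

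Once these two hypotheses are verified, Theorem~\ref{coro:diego} applies directly to the presentation $M/\tor(M) = R^q/A'$ with complement $B$ and codimension-$2$ factor $T'$, yielding that $M/\tor(M)$ is free, which is exactly the conclusion of Theorem~\ref{thm:diego}.

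I expect the main obstacle to be the codimension bookkeeping in the passage from $T$ to $T'$: one must make precise that enlarging $A$ to $A'$ can only \emph{increase} the codimension of the associated factor module (equivalently, that the extra piece $A'/A \cong \tor(M)$ contributes only high-codimension support), and this requires a clean statement relating codimension across the short exact sequence $0 \to (A'+B)/(A+B) \to T \to T' \to 0$. The subtlety is that codimension (or grade) is not in general monotone under arbitrary quotients, so I would lean on the fact that the kernel here is a subquotient of $\tor(M)$ together with the behavior of codimension in short exact sequences, rather than on a naive monotonicity claim; pinning down that the torsion module $\tor(M)$ does not drag the codimension of $T'$ below $2$ is the delicate point.
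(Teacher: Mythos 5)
Your overall reduction is exactly the paper's: pass from $A$ to $A' := \pi^{-1}(\tor(M))$, check that $B$ still intersects $A'$ trivially and that the new factor module still has codimension at least $2$, and feed $M/\tor(M) \cong R^q/A'$ back into Theorem~\ref{coro:diego}. Your intersection argument is fine and is essentially the paper's own: $A' \cap B$ is torsion-free as a submodule of $R^q$, maps injectively into $\tor(M)$ because $A \cap B = 0$, and a module that is simultaneously torsion and torsion-free vanishes.

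The codimension step is where your bookkeeping goes astray, in two ways. First, the claim that the kernel $(A'+B)/(A+B)$, being a quotient of $\tor(M)$, ``hence has codimension at least $2$'' is a non-sequitur: quotients of $\tor(M)$ are merely torsion, i.e., of codimension at least $1$, and torsionness alone gives nothing more. Second, and more importantly, your reason for avoiding the direct argument --- that codimension ``is not in general monotone under arbitrary quotients'' --- is backwards. For codimension as defined via the annihilator (cf.\ Remark~\ref{rmrk:CM}), monotonicity under quotients is trivial: if $T'$ is a factor module of $T$ then $\Ann(T) \subseteq \Ann(T')$, hence $\codim T' \geq \codim T \geq 2$. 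That one line finishes the proof; no analysis of the kernel is needed, and your hedge against the ``naive monotonicity claim'' rejects precisely the correct and complete argument while leaning on the flawed one. What genuinely fails to be monotone under quotients is the \emph{grade}, and that is exactly where the paper's noncommutative generalization (Section~\ref{sec:module-theoretic_generalization2}) has to work: the Proposition proved there shows that a factor module of a \emph{torsion} module of grade at least $2$ again has grade at least $2$, using that the kernel of the quotient map is torsion, hence $\Hom$ of it into $R$ vanishes, and then the long exact $\Ext(-,R)$-sequence. So your instinct that the kernel's relation to $\tor(M)$ is the crucial point is the right instinct --- but for the grade version, where the relevant consequence is that the kernel is torsion, not that it has high codimension; for the codimension statement you were actually asked to prove, monotonicity settles the matter immediately.
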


In the proposed module-theoretic generalization of Theorem~\ref{coro:diego} the notions ``torsion-free'', ``codimension'' and ``free'' are replaced by the more homological notions ``torsionless'', ``grade'', and ``projective'', respectively.

We start by describing the very basics of the duality between linear systems and modules in Section~\ref{sec:duality}.
The two notions ``torsionless'' and ``grade'' are briefly recalled in Section~\ref{sec:torsionless_grade}.
In Section~\ref{sec:module-theoretic_generalization1} a module-theoretic generalization of Theorem~\ref{coro:diego} is stated and proved.
The proof relies on an \textsc{Abel}ian generalization which is treated in Section~\ref{sec:Abelian_generalization}.
Since torsion-freeness admits a system-theoretic interpretation we need to discuss the relation between being torsion-free and being torsionless to justify the word ``generalization''.
Indeed, torsionless modules are torsion-free but the converse is generally false (cf.~Remark~\ref{rmrk:Q_not_torsionless} for a precise statement).
Section~\ref{sec:torsionless=torsion-free} describes a fairly general setup in which the converse does hold.
And only when it holds are we able to prove the corresponding generalization of Theorem~\ref{thm:diego}.
This is done in Section~\ref{sec:module-theoretic_generalization2}.
Finally, Appendix~\ref{sec:converse_key_lemma} contains a converse to the key Lemma of this paper.

\medskip
\textbf{Convention:} Unless stated otherwise $R$ will always denote a not necessarily commutative unitial ring.
The term ``domain'' will not imply commutativity.

Everything below is valid for left and for right $R$-modules.

\section{Duality between linear system theory and module theory} \label{sec:duality}

For an $R$-module $\F$ we define the category of \textbf{$\F$-behaviors} as the image of the contravariant $\Hom$-functor $\Hom_R(-,\F): R\mathrm{-Mod} \to \mathrm{Mod-}C$, where $C$ is the center of $R$ (or the endomorphism ring of $\F$ or any unitial subring of thereof).
$R$ is called the ring of functional operators and $\F$ a signal module or signal space.

An $R$-module $M$ is said to be cogenerated by $\F$ if $M$ can be embedded into a direct power $\F^I$ for some\footnote{One can take the index set to be the solution space $I := \Hom_R(M,\F)$ and require in the definition that the evaluation map from $M$ to the direct power $\F^I$, sending $m \in M$ to the map $I \to \F, \phi \mapsto \phi(m)$, is an embedding.} index set $I$.
$\F$ is a called a cogenerator if it cogenerates any $R$-module $M$, or, equivalently, if the duality functor $\Hom_R(-,\F)$ is faithful.
In particular, a cogenerator is a faithful $R$-module.
The duality functor $\Hom_R(-,\F)$ is exact if and only if $\F$ is injective.
An injective $\F$ is a cogenerator if and only if the solution space $\Hom_R(M,\F) \neq 0$ for each\footnote{Cyclic or even simple $R$-modules suffice (cf.~\cite[Theorem~3.1]{TI64}).} $M \neq 0$.
In particular, all simples can be embedded into an injective cogenerator.
Summing up, $\Hom_R(-,\F)$ is exact and reflects exactness\footnote{This follows easily from the fact that an exact faithful functor of \textsc{Abel}ian categories is conservative, i.e., reflects isomorphisms (see, e.g., \cite[Lemma~A.1]{BL_SerreQuotients}).
An exact functor of \textsc{Abel}ian categories which also reflects exactness is called ``faithfully exact'' in  \cite[Definition~1]{TI64}.
Injective cogenerators are called ``faithfully injective'' in \cite[Definition~3]{TI64}.} (and hence faithful) if and only if $\F$ is an injective cogenerator.
In this case the $\Hom$-duality between $R$-modules and $\F$-behaviors is perfect.
The above statements are true in any \textsc{Abel}ian category with products \cite[§~IV.6]{Sten75}.

The \textsc{Abel}ian group $\Q/\Z$ of characters of $\Z$ is an injective cogenerator in the category of \textsc{Abel}ian groups.
Likewise, the $R$-module $\Hom_\Z(R,\Q/\Z)$ is called the module of characters of $R$ and is an injective cogenerator in $R\mathrm{-Mod}$.
This follows from the adjunction between $\Hom$ and the tensor product functor \cite[Proposition~I.9.3]{Sten75}.
The $k$-dual $R^\vee := \Hom_k(R,k)$ is an injective cogenerator for each $k$-algebra $R$ over a field $k$.
This classical result was already used in \cite[Corollary 3.12, Remark 3.13]{OB}.
\textsc{Plesken} and \textsc{Robertz} gave a constructive proof for the injectivity of the $k$-dual $R^\vee$ when $R$ is a multiple \textsc{Ore} extension over a computable field $k$ admitting a \textsc{Janet} basis notion  (cf.~\cite[Corollary 4.3.7, Theorem 4.4.7]{robphd}).
Furthermore, a minimal injective cogenerator always exists \cite[Proposition~19.13]{Lam} (see \cite[Subsection~19A]{Lam} for more details on injective cogenerators).

However, only those injective cogenerators which can be interpreted as a space of ``generalized functions'' (like distributions, hyperfunctions, microfunctions) are of direct significance for system theory in the engineering sense.
\textsc{Oberst} considers in \cite{OB} injective cogenerators $\F$ over commutative \textsc{Noether}ian rings which are large, i.e., satisfying $\Ass(\F)=\Spec(R)$.
\textsc{Fröhler} and \textsc{Oberst} prove in \cite{FO98} that the space of \textsc{Sato} hyperfunctions on an open interval $\Omega \subset \R$ is an injective cogenerator for the noncommutative ring $R:=A\left[\frac{\mathrm{d}}{\mathrm{d}t}\right]$ where $A := \left\{\frac{f}{g} \mid f,g \in \C[t], \forall \lambda \in \Omega: g(\lambda) \neq 0 \right\}$.
\textsc{Zerz} shows in \cite{Zer06} that the space of $\R$-valued functions on $\R$ which are smooth except at finitely many points\footnote{As a referee remarked, this restriction rules out singularities which solutions of ODEs with varying coefficients might generally exhibit.} is an injective cogenerator for the rational \textsc{Weyl} algebra $B_1(\R)=\R(t)[\frac{\mathrm{d}}{\mathrm{d}t}]$.

From now on let $\F$ be an injective cogenerator with system-theoretic relevance.
Restricting to factor modules $M=R^q/A$ of a fixed free module $R^q$ yields a (non-intrinsic) \textsc{Galois} duality between the submodules $A$ of $R^q$, the so-called equations submodules, and $\F$-behaviors $\mathscr{M} = \Hom_R(R^q/A, \F)$.

In system-theoretic terms a factor module of the module $M$ corresponds to a subbehavior of $\mathscr{M} = \Hom_R(M,\F)$, and the torsion-free factor to the largest controllable subbehavior.
All degrees of torsion-freeness (including reflexivity and projectivity) are related to successive parametrizability of multi-dimensional systems in \cite{PQ99,CQR05}.
Freeness of modules corresponds to flatness of linear systems \cite{Fl}.
A common factor module $T$ of $M=R^q / A$ and $N=R^q / B$ corresponds to the so-called interconnection, i.e., the intersection $\mathscr{M} \cap \mathscr{N}$ of the two behaviors $\mathscr{M},\mathscr{N}$ corresponding to $M,N$.
The interconnection is called regular when $A \cap B = 0$.
Finally, the codimension of a module corresponds to the degree of autonomy of the corresponding behavior.
This paper suggests, in particular, the use of grade as a substitute for codimension to define the degree of autonomy in the noncommutative setting.

\section{Torsionless modules and grade} \label{sec:torsionless_grade}

We will use the notion of a torsionless module, due to \textsc{H.~Bass}, to provide a natural module-theoretic generalization of Theorem~\ref{coro:diego}.

\begin{defn}
  An $R$-module $M$ is called \textbf{torsionless} if it is cogenerated by the free module $R$, i.e., if it can be embedded into a direct power $R^I := \prod_{i \in I} R$, for some index set $I$.
\end{defn}

\begin{rmrk} \label{rmrk:torsionless}
From the definition we conclude that:
  \begin{enumerate}
    \item Any submodule of a torsionless module is torsionless and any direct product (and hence sum) of torsionless modules is torsionless.
    \item Since direct sums embed in direct products any submodule of a free module is torsionless.
      Thus, projective modules and left and right ideals are torsionless. \label{rmrk:torsionless.projective}
  \end{enumerate}
\end{rmrk}

We denote by $M^*:=\Hom_R(M,R)$ the $R$-dual of an $R$-module $M$.
It is easy to see that $M$ is torsionless iff\footnote{
The ``only if''-part follows by setting $\lambda$ to be the composition of the embedding $\jmath:M \hookrightarrow R^I$ and the projection $\pi_\iota: R^I \to R$ such that $\pi_\iota(\jmath(m)) \neq 0$.
The ``if''-part follows by setting $I=M^*$ and $\jmath$ to be the evaluation map $\epsilon_M: M \to M^{**}, m \mapsto (\lambda \mapsto \lambda(m))$  considered as a map to $R^I \supset M^{**}$.} for any $m \in M \setminus \{0\}$ there exists a functional $\lambda \in M^*$ such that $\lambda(m) \neq 0$.
Hence, $M$ is torsionless iff the natural evaluation map
\[
  \epsilon_M: M \to M^{**}, m \mapsto (\lambda \mapsto \lambda(m))
\]
is a monomorphism\footnote{Recall, $M$ is called  \textbf{reflexive} if $\epsilon_M$ is an isomorphism.}.
The dualized evaluation map $\epsilon_M^*:M^{***} \to M^*$ is a post-inverse of the evaluation map of the dual module $\epsilon_{M^*}:M^* \to M^{***}$, i.e., the latter is a split monomorphism (cf.~\cite[Remark~(4.65).(f)]{Lam99}).
In particular, the dual $M^*$ and the double-dual $M^{**}=(M^*)^*$ are torsionless modules.
This gives rise to the following definition.

\begin{defn}
  The \textbf{torsionless factor} of an $R$-module $M$ is the coimage $M/\ker \epsilon_M$ of the evaluation map.
\end{defn}

\begin{rmrk} \label{rmrk:Q_not_torsionless}
  If $R$ is a domain then any torsionless module is torsion-free.
  The converse is false: The infinitely generated $\Z$-module $\Q$ is torsion-free with a zero evaluation map, i.e., the ``opposite'' of being torsionless.
  Finitely generated modules behave better in this respect (cf.~Theorem~\ref{thm:torsionless=torsion-free}).
  While submodules of torsion modules are torsion, the torsionless $\Z$-submodule $\Z \leq \Q$ shows that having a zero evaluation map is not stable under passing to submodules.
  Still, the factor module $\Q/\Z$ has a zero evaluation map.
\end{rmrk}

Recall, an $R$-module $T$ is is said to have \textbf{grade} at least\footnote{This is a more convenient than defining the grade by an equality.} $c$ if $\Ext^i(T,R)=0$ for all $i<c$.
The grade of the associated cyclic module $R/\Ann(T)$ coincides with the grade of $T$.

\begin{rmrk} \label{rmrk:CM}
  Let $R$ be a commutative \textsc{Noether}ian ring. 
  The grade of an $R$-module $T$ coincides, by a theorem of \textsc{Rees}, with $\depth \Ann(T) := \depth(\Ann(T),R)$ \cite[Proposition~18.4]{eis}, \cite[Theorem~1.2.5]{BrunsHerzog}.
  $R$ is called \textsc{Cohen-Macaulay} if the notions of codimension and grade coincide, i.e., if $\codim T := \codim \Ann(T)$ coincides with $\grade T = \depth \Ann(T)$ for all modules $T$ \cite[Introductions to Chapters~9 and~18]{eis}.
  The reader is referred to \cite[Part~II]{BrunsHerzog} for large classes of \textsc{Cohen-Macaulay} rings.
\end{rmrk}

The following definition is used to formulate the module-theoretic generalization of Theorem~\ref{coro:diego}.

\begin{defn} \label{defn:projective_up_to_grade}
  We say that an $R$-module $M$ is \textbf{projective up to grade $c$} if there exists a projective module $P$ and an epimorphism $P \stackrel{\pi}{\twoheadrightarrow} M$ such that $A := \ker \pi \leq P$ admits \textbf{a complement up to grade $c$ in $P$}, i.e., if there exists a submodule $B \leq P$ with $A \cap B=0$ and $T:=P / (A + B)$ has grade at least $c$.
  If $M$ is finitely generated then we insist that $P$ is finitely generated.
\end{defn}

\section{A module-theoretic generalization of Theorem~\ref{coro:diego}} \label{sec:module-theoretic_generalization1}

Projective modules are torsionless (Remark~\ref{rmrk:torsionless}.\eqref{rmrk:torsionless.projective}) and obviously projective up to grade $c$, for any $c$.
The converse is true for finitely generated modules and $c \geq 2$, yielding a module-theoretic generalization of Theorem~\ref{coro:diego}:
\begin{theorem} \label{coro:main1}
  Let $R$ be a ring and $M$ a finitely generated $R$-module.
  If $M$ is torsionless and projective up to grade $2$ then $M$ is projective.
\end{theorem}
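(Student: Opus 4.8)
The plan is to dualize the entire configuration and let the grade hypothesis on $T$ split the dual of $P$. Write out the data supplied by ``projective up to grade $2$'': a finitely generated projective $P$, an epimorphism $\pi\colon P\twoheadrightarrow M$ with $A:=\ker\pi$, and a submodule $B\le P$ with $A\cap B=0$ and $T:=P/(A+B)$ of grade at least $2$, i.e.\ $T^*=0$ and $\Ext^1(T,R)=0$. Since $A\cap B=0$ the sum is direct, $A+B=A\oplus B$, and we record the two short exact sequences
\[
  0\to A\oplus B\to P\to T\to 0
  \qquad\text{and}\qquad
  0\to A\xrightarrow{\iota} P\xrightarrow{\pi} M\to 0 .
\]

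The first computation is the heart of the argument. Applying $(-)^*=\Hom_R(-,R)$ to the left sequence and using $T^*=\Ext^1(T,R)=0$ shows that restriction
\[
  P^*\xrightarrow{\ \sim\ }(A\oplus B)^*=A^*\oplus B^*
\]
is an isomorphism whose two components are the restriction maps along $A\hookrightarrow P$ and $B\hookrightarrow P$. In particular the restriction $\iota^*\colon P^*\to A^*$ is, up to this isomorphism, the first projection, hence split epic, with kernel $B^*$. Now dualize the second sequence: since $\Ext^1(P,R)=0$ one gets $\Ext^1(M,R)=\coker(\iota^*)=0$, so that
\[
  0\to M^*\xrightarrow{\pi^*} P^*\xrightarrow{\iota^*} A^*\to 0
\]
is short exact. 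Comparing kernels identifies $M^*\cong\ker\iota^*=B^*$ as a direct summand of the finitely generated projective $P^*=A^*\oplus B^*$. Thus $M^*$, and likewise $A^*$, is finitely generated projective.

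It remains to pass from $M^*$ back to $M$, and this is where torsionlessness enters. Since $M^*$ is finitely generated projective, so is its dual $M^{**}$. Dualizing the last short exact sequence, and using that $A^*$ is projective so that $\Ext^1(A^*,R)=0$, gives an exact
\[
  0\to A^{**}\to P^{**}\xrightarrow{\pi^{**}} M^{**}\to\Ext^1(A^*,R)=0 ,
\]
so $\pi^{**}$ is epic. Placing the evaluation maps $\epsilon_A,\epsilon_P,\epsilon_M$ as a natural ladder between the sequence $0\to A\to P\to M\to 0$ and this one, with $\epsilon_P$ an isomorphism (as $P$ is finitely generated projective, hence reflexive), a short diagram chase yields $\coker\epsilon_M\cong\Ext^1(A^*,R)=0$. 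Hence $\epsilon_M$ is epic; being torsionless it is also monic, so $\epsilon_M$ is an isomorphism and $M\cong M^{**}$ is projective.

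The step I expect to be most delicate is the identification in the second paragraph of the restriction $\iota^*\colon P^*\to A^*$ with the first projection of the splitting $P^*\cong A^*\oplus B^*$, via functoriality of $(-)^*$ applied to $A\hookrightarrow A\oplus B\hookrightarrow P$; this is precisely what converts the grade hypothesis into the statement that $M^*$ is a direct summand, and everything downstream rests on it. The concluding chase is routine once $A^*$ is known to be projective. It is worth noting where each hypothesis is used: the grade bound on $T$ produces both the splitting and the vanishing $\Ext^1(M,R)=0$, while torsionlessness is exactly what upgrades the surjectivity of $\epsilon_M$ to an isomorphism. Dropping torsionlessness, the same chase would only give that the torsionless factor $M/\ker\epsilon_M\cong M^{**}$ is projective, which is the form needed for the more general statement.
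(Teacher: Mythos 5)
Your proof is correct, but it takes a genuinely different route from the paper. The paper reduces Theorem~\ref{coro:main1} to an \textsc{Abel}ian-category splitting statement (Theorem~\ref{thm:main1}): it uses torsionlessness to embed $M$ into a direct power $R^I$ and deduce $\Hom(T,M)=0$, uses projectivity of $P$ to get $\Ext^1(T,P)=0$, extracts $\Ext^1(T,A)=0$ from the long exact sequence, and then (Lemma~\ref{lemm:key}) splits the extension $0 \to A \to P/B \to T \to 0$ to produce an explicit complement $B' \cong M$ of $A$ in $P$ containing $B$; thus $M$ is exhibited as a direct summand of $P$. You instead dualize: the grade hypothesis makes the restriction map $P^* \to A^*\oplus B^*$ an isomorphism, which identifies $M^* \cong B^*$ as a projective summand of $P^*$, and then a double-dualization chase gives $\coker \epsilon_M \cong \Ext^1(A^*,R)=0$, with torsionlessness entering only as injectivity of $\epsilon_M$, so that $M \cong M^{**}$ is projective. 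The paper's argument buys category-level generality (the splitting lemma and its converse in Appendix~\ref{sec:converse_key_lemma} make sense in any \textsc{Abel}ian category, with the module-theoretic hypotheses isolated in two Ext-vanishing conditions) and an explicit internal direct-sum decomposition of $P$. Your argument is tied to module categories, since it needs the contravariant $R$-dual functor, but it buys something the paper only obtains later and under extra hypotheses: without assuming $M$ torsionless, your chase already shows that the torsionless factor $M/\ker\epsilon_M \cong M^{**}$ is projective over an \emph{arbitrary} ring, whereas the paper's Section~\ref{sec:module-theoretic_generalization2} proves the torsion-free analogue only for two-sided \textsc{Noether}ian domains (where torsion-free and torsionless coincide by Theorem~\ref{thm:torsionless=torsion-free}), at the cost of a separate argument about torsion submodules and grade of factor modules. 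In effect you have reproved, in this special situation, the relevant piece of \textsc{Auslander}'s exact sequence~\eqref{epsilon}.
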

\begin{proof}
  Let $P \stackrel{\pi}{\twoheadrightarrow} M$ be the f.g.~projective module of Definition~\ref{defn:projective_up_to_grade}, $A := \ker \pi \leq P$, $B$ be a complement up to the grade $2$ of $A$ in $P$, and $T := P / (A + B)$ the factor module of grade $\geq 2$, i.e., $\Hom(T,R)=0=\Ext^1(T,R)$.
  The assertion will follow from Theorem~\ref{thm:main1} as soon as we have shown that $\Hom(T,M) = 0 = \Ext^1(T,P)$ which we will do now: \\
  Since $M$ is torsionless there exists an embedding\footnote{Proposition~\ref{prop:embed_in_free} provides an alternative embedding into a free module of finite rank.} $\jmath: M \hookrightarrow R^I$ in a direct product for some index set $I$.
  As the left exact covariant $\Hom$-functor commutes with direct products \cite[Proposition~I.3.5]{HS} it follows that
  \[
    \Hom(T,M) \cong \Hom(T, \jmath(M)) \leq \Hom(T,R^I) \cong \Hom(T,R)^I=0 \mbox{.}
  \]
  And since $P$ is finitely generated projective it is a direct summand of a free module $R^p \cong P \oplus P'$ of finite rank $p$.
  Finally, the additivity of $\Ext^1(T,-)$ yields
  \[
    \Ext^1(T,P) \leq \Ext^1(T,P) \oplus \Ext^1(T,P') \cong \Ext^1(T,R^p) = \Ext^1(T,R)^p = 0 \mbox{.} \qedhere
  \]
\end{proof}

\section{An \textsc{Abel}ian generalization of Theorem~\ref{coro:diego}} \label{sec:Abelian_generalization}

\begin{wrapfigure}[6]{r}{2.7cm}
\vskip -0.8cm
\begin{tikzpicture}
  %% coordinates
  \coordinate (dl) at (-1,-1);
  \coordinate (dr) at (1,-1);
  
  %% nodes
  \node (T) {$T$};
  \node (M) at (dl) {$M$};
  \node (N) at (dr) {$N$};
  \node (P) at ($(M)+(dr)$) {$P$};
  \node (A) at ($(P)+(dr)$) {$A$};
  \node (B) at ($(P)+(dl)$) {$B$};
  
  %% arrows
  \draw [->>] (M) -- (T);
  \draw [->>] (N) -- (T);
  \draw [->>] (P) -- (M);
  \draw [->>] (P) -- (N);
  \draw [left hook->] (A) -- (P);
  \draw [right hook->] (B) -- (P);
  
\end{tikzpicture}
\end{wrapfigure}
Let $\A$ be an \textsc{Abelian} category and $P \cong M \times_T N \in \A$ a subdirect product\footnote{Also called fiber product.}  of two objects $M$ and $N$ over a common factor object $T$, i.e., $M \twoheadleftarrow P \twoheadrightarrow N$ is the pullback of the two epis $M \twoheadrightarrow T \twoheadleftarrow N$.
\begin{theorem} \label{thm:main1}
  If $\Hom(T,M) = 0 = \Ext^1(T,P)$ then the epi $P \twoheadrightarrow M$ is split and $M$ is isomorphic to a direct summand of $P$.
  If furthermore $P$ is projective then so is $M$.
\end{theorem}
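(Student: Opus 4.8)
The plan is to exploit the universal property of the pullback together with the two vanishing hypotheses to produce a splitting of $P \twoheadrightarrow M$. Write the pullback square as $M \xleftarrow{\alpha} P \xrightarrow{\beta} N$, sitting over $M \xrightarrow{f} T \xleftarrow{g} N$. The kernel of $\alpha: P \twoheadrightarrow M$ is canonically isomorphic to the kernel of $g: N \twoheadrightarrow T$, since in a pullback the induced map on kernels of parallel edges is an isomorphism. Thus we obtain a short exact sequence
\[
  0 \to \ker g \to P \xrightarrow{\alpha} M \to 0,
\]
and my goal is to show this sequence splits.

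**Reducing the splitting to the $\mathrm{Ext}$-hypothesis.**
First I would identify $\ker g$ more usefully. From the defining epi $g: N \twoheadrightarrow T$ we have a short exact sequence $0 \to \ker g \to N \to T \to 0$, and symmetrically $0 \to \ker f \to M \to T \to 0$ for $f: M \twoheadrightarrow T$. The hypothesis $\Hom(T,M)=0$ should be used to pin down the structure of $\ker f$ relative to $M$, or more directly to show that a certain connecting/lifting obstruction vanishes. The cleanest route I expect is: the extension class of $0 \to \ker g \to P \to M \to 0$ lives in $\Ext^1(M,\ker g)$, but I want to relate it to $\Ext^1(T,P)$ instead. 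To do this I would apply $\Hom(-,P)$ or analyze the map $f$ as follows. Consider the composite $P \xrightarrow{\alpha} M \xrightarrow{f} T$; this equals $g \circ \beta$, and both routes realize the canonical epi $P \twoheadrightarrow T$ with kernel $A+B$ (in the module notation $P \cong M \times_T N$, $M = P/A$, $N = P/B$, $T = P/(A+B)$).

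**The main step: the two vanishings force a section.**
The core of the argument is to show $P \twoheadrightarrow M$ splits. I would argue that a section of $\alpha$ is the same as a lift in the pullback, i.e.\ a map $M \to P$ over $M$, equivalently a map $M \to N$ making the triangle over $T$ commute — but that requires lifting $f: M \to T$ through $g: N \to T$, and the obstruction to such a lift is precisely a class in a relative $\mathrm{Ext}^1$. Tracing this through the long exact sequence obtained by applying $\Hom(T,-)$ to $0 \to \ker g \to N \to T \to 0$, together with $\Hom(T,M)=0$ and $\Ext^1(T,P)=0$, should kill the obstruction. Concretely: $\Ext^1(T,P)=0$ guarantees the sequence $0 \to \ker g \to P \to M \to 0$ splits \emph{after} one identifies its class with (a summand of) $\Ext^1(T,P)$ via the pullback, and $\Hom(T,M)=0$ ensures this identification is clean (no stray $\Hom$ contribution surviving in the relevant long exact sequence). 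I anticipate the delicate point is exactly this bookkeeping — verifying that the extension class of the $M$-sequence is the image of an element of $\Ext^1(T,P)$ rather than of $\Ext^1(M,-)$, which is where the pullback structure (not merely the existence of the maps) is essential.

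**Conclusion and the projective case.**
Once $\alpha: P \twoheadrightarrow M$ is split, $M$ is a direct summand of $P$, say $P \cong M \oplus \ker g$. For the final clause, if $P$ is projective then any direct summand of $P$ is projective, since projectivity passes to direct summands (a retract of a projective is projective). Hence $M$ is projective, completing the proof. The step I expect to be the main obstacle is the identification of the extension class with an element controlled by $\Ext^1(T,P)$; everything else is formal diagram-chasing in the \textsc{Abel}ian category $\A$.
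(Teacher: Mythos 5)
Your overall architecture is the same as the paper's: reduce the splitting of $P \twoheadrightarrow M$ to the vanishing of the class of the extension $0 \to A \to N \to T \to 0$ in $\Ext^1(T,A)$, where $A := \ker(P \twoheadrightarrow M)$ is canonically identified with your $\ker g = \ker(N \twoheadrightarrow T)$, and then transfer the splitting back through the pullback. Your transfer step is sound and is essentially the paper's key Lemma: a section $s$ of $N \twoheadrightarrow T$ yields, by the universal property of $P \cong M \times_T N$, a section of $P \twoheadrightarrow M$ (the paper instead exhibits a complement $B' \geq B$ of $A$ inside $P$, which amounts to the same thing), and ``a retract of a projective is projective'' correctly settles the final clause.

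The genuine gap is at the step you yourself flag as delicate: you never derive the vanishing of the obstruction from the two hypotheses, and the mechanism you anticipate is not the right one. The long exact sequence you propose to use --- $\Hom(T,-)$ applied to $0 \to \ker g \to N \to T \to 0$ --- involves only $T$, $N$ and $\ker g \cong A$; neither $\Hom(T,M)$ nor $\Ext^1(T,P)$ occurs in it, so the hypotheses cannot ``kill'' anything there. That sequence only translates the problem: the $N$-extension splits iff its class $\delta(\mathrm{id}_T) \in \Ext^1(T,A)$ vanishes. The missing line, which is the entire heart of the paper's proof, is to apply $\Hom(T,-)$ to the \emph{other} short exact sequence, $0 \to A \to P \to M \to 0$, which yields exactness of
\[
  \Hom(T,M) \to \Ext^1(T,A) \to \Ext^1(T,P) \mbox{,}
\]
whence $\Ext^1(T,A) = 0$ outright --- so every extension of $T$ by $A$ splits and no bookkeeping of particular classes is needed. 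Relatedly, your proposed identification of the class of $0 \to A \to P \to M \to 0$ with ``(a summand of) $\Ext^1(T,P)$'' is incorrect: that class lives in $\Ext^1(M,A)$, and equals $f^*$ of the class of the $N$-extension in $\Ext^1(T,A)$; the group $\Ext^1(T,P)$ enters the argument only as the target of a map out of $\Ext^1(T,A)$, never as a receptacle for the extension class you are trying to kill.
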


The following simple lemma is the essence of the short proof of Theorem~\ref{thm:main1}.
We keep the above notation and set $A := \ker \left( P \twoheadrightarrow M \right)$ and $B := \ker\left( P \twoheadrightarrow N \right)$.

\begin{lemm} \label{lemm:key}
  If $\Ext^1(T,A) = 0$ then $A$ has a complement $B'\cong M$ in $P$ which contains $B$.
  In particular, $M$ is isomorphic to a direct summand of $P$.
\end{lemm}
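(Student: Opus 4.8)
The plan is \emph{not} to split $0 \to A \to P \xrightarrow{p} M \to 0$ directly --- its splitting obstruction lives in $\Ext^1(M,A)$, about which we are told nothing --- but instead to descend to the quotient $N \cong P/B$, where the relevant obstruction collapses to exactly $\Ext^1(T,A)$, split there, and lift the splitting back to $P$. First I would record the elementary consequences of the pullback structure. Since $P \cong M \times_T N$ is the pullback of the two epis $M \twoheadrightarrow T \twoheadleftarrow N$, the canonical map $(p,q): P \to M \oplus N$ is a monomorphism, whence $A \cap B = \ker p \cap \ker q = 0$; moreover $\ker(P \twoheadrightarrow T) = A + B$, so $P/(A+B) \cong T$. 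In particular the epi $P \twoheadrightarrow N$ identifies $N$ with $P/B$.

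Next I would exhibit the short exact sequence that carries the hypothesis. Writing $\bar A := (A+B)/B$ for the image of $A$ in $N \cong P/B$, the relation $A \cap B = 0$ gives $\bar A \cong A$, while $N/\bar A \cong P/(A+B) \cong T$. Hence
\[
  0 \longrightarrow A \longrightarrow N \longrightarrow T \longrightarrow 0
\]
is exact. Since $\Ext^1(T,A) = 0$ by assumption, this sequence splits, so there is a subobject $C \leq N$ with $N = \bar A \oplus C$ and $C \cong T$.

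Then I would lift $C$ back to $P$. Let $B' \leq P$ be the preimage of $C$ under $P \twoheadrightarrow N = P/B$; by construction $B \leq B'$ and $B'/B \cong C$. It remains to check that $B'$ is a complement of $A$. For $A + B' = P$: the image of $A + B'$ in $N$ is $\bar A + C = N$, and since $B \leq B'$ this forces $A + B' = P$. For $A \cap B' = 0$: any $x \in A \cap B'$ maps into $\bar A \cap C = 0$, so $x \in B$, and then $x \in A \cap B = 0$. Thus $P = A \oplus B'$, and consequently $B' \cong P/A \cong M$, with $B \leq B'$, as required.

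The one genuinely delicate point is the choice of sequence to split: a priori the obstruction to splitting $p$ sits in $\Ext^1(M,A)$, and only after passing to the quotient $N = P/B$ does it reduce to the available $\Ext^1(T,A)$. Everything afterwards is bookkeeping with the two relations $A \cap B = 0$ and $P/(A+B) \cong T$; the only mild subtlety is ensuring the lifted subobject really contains $B$, which is automatic because $B'$ is defined as the \emph{full} preimage of $C$.
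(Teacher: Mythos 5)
Your proof is correct and takes essentially the same route as the paper's: both descend to $N \cong P/B$, split the extension $0 \to A \to N \to T \to 0$ using $\Ext^1(T,A)=0$, and lift the resulting complement back to a subobject $B' \geq B$ of $P$. The only difference is that you spell out the bookkeeping ($A \cap B' = 0$ and $A + B' = P$) that the paper compresses into the single identity $B' \cap (A+B) = B$.
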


\vskip -0.4cm
\begin{wrapfigure}[8]{r}{4.7cm}
\vskip -0.55cm
\begin{tikzpicture}[C/.style={color=red,line width=1.5pt,dotted},new/.style={color=blue,line width=2pt},map/.style={color=gray,dashed}]
  %% coordinates
  \coordinate (d) at (0,-1);
  \coordinate (dl) at (-1.5,-1.5);
  \coordinate (l) at (-1,0);
  \coordinate (dr) at (1,-1);
  
  \coordinate (P);
  \coordinate (B') at (dr);
  \coordinate (S) at (d);
  \coordinate (A) at ($(S)+(dl)$);
  \coordinate (B) at ($(S)+(dr)$);
  \coordinate (0) at ($(A)+(dr)$);
  
  %% circles
  \fill (P) circle (2pt);
  \fill (B') circle (2pt);
  \fill (S) circle (2pt);
  \fill (A) circle (2pt);
  \fill (B) circle (2pt);
  \fill (0) circle (2pt);
  
  %% arrows
  \draw (0) -- (A) -- (S) -- (P);
  \draw (0) -- (B) -- (S);
  \draw (B) -- (B') -- (P);
  \draw[decorate,decoration=brace] ($(S)+0.1*(l)-0.1*(d)$) -- node [left] {\small $T$} ($(P)+0.1*(l)+0.1*(d)$);
  \draw[decorate,decoration=brace] ($(A)+0.2*(l)$) -- node [left] {\small $M$} ($(P)+1.7*(l)$);
  \draw[decorate,decoration=brace] ($(P)-1.7*(l)$) -- node [right] {\small $N$} ($(B)-0.7*(l)$);
  \draw[dotted,gray,thick] ($(P)+1.5*(l)$) -- ($(P)+0.22*(l)$);

  \draw[dotted,gray,thick] ($(P)-1.5*(l)$) -- ($(P)-0.22*(l)$);
  \draw[dotted,gray,thick] ($(B)-0.5*(l)$) -- ($(B)-0.22*(l)$);
  
  %% nodes
  \node at (P) [above right=-0.1em] {\small $P$};
  \node at (B') [right] {\small $B'$};
  \node at (S) [left=0.4em] {\small $S$};
  \node at (A) [below left] {\small $A$};
  \node at (B) [below right] {\small $B$};
  
\end{tikzpicture}
\end{wrapfigure}
\mbox{}
\begin{proof}
  Set $S:=A+B \leq P$, the direct sum of $A$ and $B$.
  The assumption $\Ext^1(T,A) = 0$ and the natural isomorphism $S/B \cong A$ imply that the short exact sequence $0 \to S/B \to N \to T \to 0$ splits.
  In other words, there exists a subobject $B'$ of $P$ with $B' \geq B$ such that $B'/B$ is a complement of $S/B \cong A$ in $P/B$.
  Since $B' \cap (A+B) = B$ it follow that $B'$ is a complement of $A$ in $P$, canonically isomorphic to $M$.
\end{proof}

\begin{proof}[Proof of Theorem~\ref{thm:main1}]
  To apply Lemma~\ref{lemm:key} we need to show that $\Ext^1(T,A) = 0$.
  Indeed, $\underbrace{\Hom(T,M)}_{=0} \to \Ext^1(T,A) \to \underbrace{\Ext^1(T,P)}_{=0}$ is part of the long exact $\Ext(T,-)$-sequence with respect to the short exact sequence $0 \to A \to P \to M \to 0$.
  Hence, $\Ext^1(T,A) = 0$.
\end{proof}

Lemma~\ref{lemm:key} has an interesting converse which we did not need here.
It is treated in Appendix~\ref{sec:converse_key_lemma}.

\section{When does torsion-free imply torsionless?} \label{sec:torsionless=torsion-free}

In this section we assume $R$ to be two-sided \textsc{Noether}ian\footnote{$R$ two-sided coherent is, as usual, enough but we stick to two-sided \textsc{Noether}ian for lack of references.}.
A \textbf{finite projective presentation} of a f.g.~$R$-module $M$ is an exact sequence
 $M \twoheadleftarrow P_0 \xleftarrow{\partial} P_1$ with f.g.~projective $R$-modules $P_0$ and $P_1$.
For such a module define the \textbf{\textsc{Auslander} dual} $A(M)$ to be the cokernel of the dual (or pullback) map $\partial^*:P_0^* \to P_1^*$.
Like the syzygy modules of $M$, the \textsc{Auslander} dual is well-defined up to projective equivalence.
In particular $\Ext^i(A(M),R)$ does not depend on the finite projective presentation for $i>0$.
Furthermore, if $M$ is projective then $A(M)=0$ (up to projective equivalence) and $\Ext^i(A(M),R)=0$ for all $i>0$ (for a converse statement cf.~\cite[Theorem~7]{CQR05}).

The kernel and cokernel of the evaluation map $\epsilon:M \to M^{**}$ were characterized by \textsc{Auslander}, where $M$ is assumed to have a finite projective presentation.
As one of many applications of his theory of coherent functors \cite{A} he proved the existence of a natural monomorphism $\tau: \Ext^1(A(M),R) \hookrightarrow M$ and a natural epimorphism $\rho: M^{**} \twoheadrightarrow \Ext^2(A(M),R)$ such that
\begin{equation} \label{epsilon} \tag{$\epsilon$}
  0 \to \Ext^1(A(M),R) \xrightarrow{\tau} M \xrightarrow{\epsilon} M^{**} \xrightarrow{\rho} \Ext^2(A(M),R) \to 0
\end{equation}
is an exact sequence.
In particular, $M$ is torsionless iff $\Ext^1(A(M),R)=0$ and reflexive iff $\Ext^i(A(M),R)=0$ for $i=1,2$.
A short elegant proof of \eqref{epsilon} can be found in \cite[Theorem~6]{CQR05} and a generalization in  \cite[Chapter~2, (2.1)]{AB} (see also \cite[Exer.~IV.7.3]{HS}).

A left (resp.~right) \textsc{Noetherian} domain $R$ satisfies the left (resp.~right) \textsc{Ore} condition and the set of torsion elements $\tor(M)$ of an $R$-module $M$ form an $R$-submodule.
The following theorem states that the two notions ``torsion-free'' and ``torsionless'' coincide for finitely generated modules.

\begin{theorem}[{\cite[Theorem~5]{CQR05}}] \label{thm:torsionless=torsion-free}
  Let $R$ be a two-sided \textsc{Noether}ian domain and $M$ a f.g.~$R$-module.
  Then the image of the natural monomorphism $\tau:\Ext^1(A(M),R) \to M$ is the torsion submodule $\tor(M)$ yielding a canonical isomorphism $\Ext^1(A(M),R) \cong \tor(M)$.
  In particular, the torsion-free factor and the torsionless factor of $M$ coincide and $M$ is torsion-free iff $M$ is torsionless.
\end{theorem}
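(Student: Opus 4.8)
The plan is to reduce everything to the single identity $\img\tau = \tor(M)$. Since $\tau$ is a monomorphism, exactness of~\eqref{epsilon} at $M$ identifies $\img\tau$ with $\ker\epsilon_M$ and with $\Ext^1(A(M),R)$; so it suffices to prove $\ker\epsilon_M = \tor(M)$. Granting this, the remaining assertions are formal: the torsionless factor $M/\ker\epsilon_M$ coincides with $M/\tor(M)$, and $M$ is torsionless (i.e.\ $\ker\epsilon_M = 0$) exactly when it is torsion-free (i.e.\ $\tor(M) = 0$).

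For the inclusion $\tor(M) \subseteq \ker\epsilon_M$ I would argue directly; this is the pointwise statement underlying Remark~\ref{rmrk:Q_not_torsionless}. If $m \in \tor(M)$, choose $0 \neq s \in R$ with $sm = 0$. Then for every $\lambda \in M^*$ one has $s\,\lambda(m) = \lambda(sm) = 0$ in the domain $R$, forcing $\lambda(m) = 0$. Hence $\epsilon_M(m) = (\lambda \mapsto \lambda(m))$ vanishes, so $m \in \ker\epsilon_M$. This step uses only that $R$ is a domain.

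The substance is the reverse inclusion $\ker\epsilon_M \subseteq \tor(M)$, i.e.\ that $\ker\epsilon_M = \Ext^1(A(M),R)$ is a torsion module. Here I would localize at the two-sided division ring of fractions $Q$ furnished by the \textsc{Ore} condition. As $Q$ is flat over $R$, applying $Q \otimes_R -$ to~\eqref{epsilon} preserves exactness, so it suffices to show that $Q \otimes_R \epsilon_M$ is an isomorphism: this forces $Q \otimes_R \ker\epsilon_M = 0$, and a module annihilated by $Q \otimes_R -$ is torsion (with $\ker\epsilon_M$ finitely generated, being a submodule of the \textsc{Noether}ian module $M$). To identify $Q \otimes_R \epsilon_M$ I would invoke flat base change: since $M$ is finitely presented over the \textsc{Noether}ian ring $R$, there are natural isomorphisms $Q \otimes_R \Hom_R(M,R) \cong \Hom_Q(Q \otimes_R M, Q)$, and iterating them turns $Q \otimes_R \epsilon_M$ into the evaluation map of the $Q$-module $Q \otimes_R M$. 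The latter is finitely generated over the division ring $Q$, hence free of finite rank and therefore reflexive, so its evaluation map is an isomorphism.

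The main obstacle is the bookkeeping hidden in the last step: one must check that the flat base-change isomorphisms are natural enough that $Q \otimes_R \epsilon_M$ really becomes the $Q$-evaluation map of $Q \otimes_R M$, and that everything is carried out on the correct side in the noncommutative setting, where the $R$-dual of a left module is a right module and $Q$ must enter as the appropriate bimodule. Once this compatibility is secured, the two inclusions combine to give $\ker\epsilon_M = \tor(M)$; feeding this back through~\eqref{epsilon} yields the canonical isomorphism $\Ext^1(A(M),R) \cong \tor(M)$ together with the stated coincidence of the torsionless and torsion-free factors.
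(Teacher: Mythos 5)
The paper itself gives no proof of this statement: it is imported verbatim as \cite[Theorem~5]{CQR05}, so your proposal can only be measured against that cited source, and it holds up --- it is essentially the standard localization argument used there. Both of your inclusions are sound: $\tor(M) \subseteq \ker\epsilon_M$ is the elementary computation $s\lambda(m)=\lambda(sm)=0$ in a domain (valid with the correct sidedness for left modules, since $\lambda$ is left $R$-linear), and the reverse inclusion follows once $Q\otimes_R\epsilon_M$ is identified with the evaluation map of $Q\otimes_R M$, which is a finitely generated module over the division ring $Q$, hence free and reflexive; flatness of the \textsc{Ore} localization then gives $Q\otimes_R\ker\epsilon_M=0$, and for \textsc{Ore} localizations $Q\otimes_R K=0$ is equivalent to $K$ being torsion (your finite-generation remark is harmless but not needed for this equivalence). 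The step you flag as the main obstacle --- naturality of the base-change isomorphism $\Hom_R(M,R)\otimes_R Q \cong \Hom_Q(Q\otimes_R M,Q)$ and its compatibility with double dualization --- is genuine bookkeeping but standard and does go through: both sides are computed as kernels of the \emph{same} matrix acting on finite free $Q$-modules, obtained either by tensoring the dualized finite presentation of $M$ with $Q$ or by dualizing over $Q$ the induced presentation of $Q\otimes_R M$, and the canonical map $\lambda\otimes q \mapsto (q'\otimes m \mapsto q'\lambda(m)q)$ realizes the identification compatibly with evaluation. A slightly leaner variant, closer to how \textsc{Chyzak}, \textsc{Quadrat}, and \textsc{Robertz} argue, applies flat base change just once, to $\Ext$ rather than to $\Hom$: the localization of $\Ext^1(A(M),R)$ is an $\Ext^1$ over $Q$, which vanishes because every $Q$-module is free; since by \eqref{epsilon} this module \emph{is} $\ker\epsilon_M$, one concludes directly that $\ker\epsilon_M$ is torsion without juggling the double dual. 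Either route yields $\ker\epsilon_M=\tor(M)$, and your formal derivation of the remaining clauses from this identity is correct.
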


\begin{coro}
  Let $R$ be a two-sided \textsc{Noether}ian domain.
  A finitely generated $R$-module of grade at least $1$ is torsion.
\end{coro}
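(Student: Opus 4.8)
The plan is to unwind the definition of grade and then feed the result straight into Theorem~\ref{thm:torsionless=torsion-free}. First I would observe that ``grade at least $1$'' means $\Ext^i(M,R)=0$ for all $i<1$, and the only such index is $i=0$; since $\Ext^0(M,R)=\Hom(M,R)=M^*$, the hypothesis is equivalent to the single vanishing $M^*=0$.

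From $M^*=0$ it follows immediately that the double dual $M^{**}=(M^*)^*=0^*=0$ vanishes as well. Hence the evaluation map $\epsilon_M\colon M\to M^{**}=0$ is forced to be the zero map, so $\ker\epsilon_M=M$ and the torsionless factor $M/\ker\epsilon_M$ is zero. At this point I would invoke Theorem~\ref{thm:torsionless=torsion-free}: for a finitely generated module over a two-sided \textsc{Noether}ian domain the torsion-free factor $M/\tor(M)$ coincides with the torsionless factor. Since the latter is zero, we conclude $M/\tor(M)=0$, that is, $M=\tor(M)$, so $M$ is torsion.

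There is no real obstacle here; the corollary is essentially a restatement of Theorem~\ref{thm:torsionless=torsion-free} at the bottom of the grade filtration. The only step deserving a moment of care is the translation of the hypothesis into $M^*=0$ and the ensuing observation that $M^{**}=0$ makes $\epsilon_M$ zero. As an alternative that bypasses the ``torsionless factor'' language, one may read the conclusion directly off the \textsc{Auslander} sequence~\eqref{epsilon}: once $M^{**}=0$, the monomorphism $\tau\colon\Ext^1(A(M),R)\hookrightarrow M$ is surjective, hence an isomorphism, and since its image is $\tor(M)$ by Theorem~\ref{thm:torsionless=torsion-free}, we again obtain $\tor(M)=M$.
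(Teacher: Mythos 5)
Your proof is correct and follows essentially the same route as the paper: translate ``grade at least $1$'' into $\Hom_R(M,R)=0$, observe that the evaluation map $M \to M^{**}$ then vanishes so the torsionless factor is zero, and invoke Theorem~\ref{thm:torsionless=torsion-free} to identify it with the torsion-free factor, giving $M=\tor(M)$. Your alternative reading via the \textsc{Auslander} sequence~\eqref{epsilon} is just an unpacking of the same argument, so there is nothing to add.
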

\begin{proof}
  Let $T$ be a such a module.
  By Theorem~\ref{thm:torsionless=torsion-free} the torsion-free factor coincides with the torsionless factor.
  The latter is trivial since $\Hom_R(T,R)=0$ and the evaluation map $T \to T^{**}$ vanishes.
  Hence $T$ is a torsion module.
\end{proof}

Any finitely generated torsion-free module over a commutative domain can be embedded into a free module of finite rank.
This can be easily seen by passing to the quotient field (cf.~\cite[the paragraph preceding (2.31)]{Lam99}).
The exact sequence \eqref{epsilon} yields a generalization to the noncommutative case.
The following proposition is part of \cite[Theorem~8]{CQR05}.

\begin{prop} \label{prop:embed_in_free}
  Let $R$ be a two-sided \textsc{Noether}ian domain.
  A finitely generated torsionless (=torsion-free) $R$-module can be embedded in a free module of finite rank.
\end{prop}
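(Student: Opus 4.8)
The plan is to realize the embedding as a composite $M \hookrightarrow M^{**} \hookrightarrow R^m$, where the first arrow is the evaluation map and the second lands in a free module of \emph{finite} rank. The first arrow is handed to us by the hypothesis. Since $M$ is torsionless, Theorem~\ref{thm:torsionless=torsion-free} gives $\tor(M)=\img\tau=0$, i.e.\ the leftmost term $\Ext^1(A(M),R)$ of the exact sequence \eqref{epsilon} vanishes. Exactness of \eqref{epsilon} then forces $\ker\epsilon = \img\tau = 0$, so the evaluation map $\epsilon\colon M \to M^{**}$ is a monomorphism. (Equivalently, one may simply invoke that ``torsionless'' means $\epsilon$ is monic.)

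It then remains to embed the double dual $M^{**}$ into a free module of finite rank, and here is where I would use the two-sided \textsc{Noether}ian hypothesis together with the fact that dualizing interchanges left and right modules. Since $M$ is finitely generated, I would pick an epimorphism $R^n \twoheadrightarrow M$ of left modules and apply the left-exact contravariant functor $\Hom_R(-,R)$ to obtain a monomorphism $M^* \hookrightarrow (R^n)^* \cong R^n$. As $R$ is right \textsc{Noether}ian, the right submodule $M^*$ of the finitely generated free right module $R^n$ is itself finitely generated.

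Next I would repeat the argument one side over: choose an epimorphism $R^m \twoheadrightarrow M^*$ of right modules and dualize once more, obtaining a monomorphism $M^{**} = (M^*)^* \hookrightarrow (R^m)^* \cong R^m$ into a free left module of finite rank. Composing the two embeddings yields $M \xrightarrow{\epsilon} M^{**} \hookrightarrow R^m$, the desired embedding into a free module of finite rank.

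The only genuine subtlety---and the reason the full two-sided \textsc{Noether}ian assumption is invoked rather than mere left \textsc{Noether}ianness---is the side switch in the middle step: the dual $M^*$ is a \emph{right} module, and one needs it to be finitely generated in order to present it by a finite free right module and dualize back. I expect this bookkeeping of sides, rather than any deep point, to be the main thing to get right; note in particular that the general fact that $M^{**}$ is torsionless only gives an embedding into a possibly infinite power $R^I$, so finite generation of $M^*$ is exactly what upgrades this to finite rank. Everything else is a formal consequence of the left-exactness of $\Hom_R(-,R)$ and the exact sequence \eqref{epsilon}.
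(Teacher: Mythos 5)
Your proof is correct, and it is essentially the paper's own argument in a different packaging, so the comparison is worth spelling out. You factor the embedding as $M \xrightarrow{\epsilon} M^{**} \hookrightarrow (R^m)^* \cong R^m$, where $R^m \twoheadrightarrow M^*$ is a finite free cover of the dual, available because $M^* \hookrightarrow (R^n)^* \cong R^n$ is a finitely generated right module by right \textsc{Noether}ianness. The paper works instead at the level of presentations: it dualizes a finite free presentation $F_1 \xrightarrow{\partial} F_0 \twoheadrightarrow M$ to get a presentation of the \textsc{Auslander} dual $A(M) = \coker \partial^*$, resolves one step further by a finite-rank free module $F_{-1}^*$, and dualizes back. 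Since $\ker \partial^* \cong M^*$, the paper's $F_{-1}^* \twoheadrightarrow \ker \partial^*$ is exactly your $R^m \twoheadrightarrow M^*$, its target $F_{-1}^{**}$ is your $(R^m)^*$, and the exactness defect $\Ext^1(A(M),R)$ whose vanishing the paper invokes is, by \eqref{epsilon}, precisely the kernel of your evaluation map $\epsilon$ (so your detour through Theorem~\ref{thm:torsionless=torsion-free} is unnecessary, as you note: monicity of $\epsilon$ is the definition of torsionless). What your version buys is economy and transparency: it needs only left-exactness of $\Hom_R(-,R)$, the definition of torsionless, and the \textsc{Noether}ian hypothesis, with no mention of $A(M)$ or reflexivity of free modules. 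What the paper's version buys is constructiveness: phrased entirely in terms of presentation matrices, it is the form implemented in \textsc{OreModules} and $\mathtt{homalg}$, and it exhibits the obstruction to the embedding as the single invariant $\Ext^1(A(M),R)$, consistent with the framework of Section~\ref{sec:torsionless=torsion-free}. Your closing remark about the side switch is exactly right, and it is the same place where the paper's proof consumes the two-sided hypothesis, namely in guaranteeing that $F_{-1}^*$ can be taken of finite rank.
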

\begin{proof}
  The two-sided coherence of $R$ assures the existence of finite rank free resolutions for f.g.~$R$-modules.
  Let $M \twoheadleftarrow F_0 \xleftarrow{\partial} F_1$ be a finite free presentation of $M$, i.e., with free modules $F_0$ and $F_1$ of finite rank.
  Dualizing we obtain a finite free presentation of $A(M)$ which we can resolve one step further and obtain $F_{-1}^* \to F_0^* \xrightarrow{\partial^*} F_1^* \to A(M)$ with $F_{-1}^*$ free of finite rank.
  Dualizing again yields an exact complex: The defect of exactness at $F_0^{**}$ is $\Ext^1(A(M),R)$ which vanishes since $M$ is torsionless.
  Using the reflexiveness of free modules of finite rank it follows that $M$ (as the cokernel of $\partial$ or $\partial^{**}$) embeds into the finite rank free module $F_{-1}^{**}$.
\end{proof}

The above Proposition is implemented for computable rings in \textsc{OreModules} \cite{CQR07} and $\mathtt{homalg}$ \cite{homalg-project,BL}.

\section{A module-theoretic generalization of Theorem~\ref{thm:diego}} \label{sec:module-theoretic_generalization2}

\begin{theorem}
  Let $R$ be a two-sided \textsc{Noether}ian domain and $M$ a finitely generated $R$-module.
  If $M$ is projective up to grade $2$ then the torsion-free factor $M/ \tor(M)$ is projective.
\end{theorem}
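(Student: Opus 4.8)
The plan is to reduce everything to Theorem~\ref{coro:main1} by showing that the torsion-free factor $M':=M/\tor(M)$ is itself \emph{torsionless} and \emph{projective up to grade $2$}, so that Theorem~\ref{coro:main1} applies directly to $M'$ and yields its projectivity. That $M'$ is torsionless is immediate: as the torsion-free factor of a f.g.\ module over a two-sided \textsc{Noether}ian domain it is torsion-free, hence torsionless by Theorem~\ref{thm:torsionless=torsion-free}. The substance lies in producing the required complement up to grade $2$ for $M'$, and in checking that enlarging the equations submodule to absorb the torsion does not destroy the grade condition.

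First I would record the data of Definition~\ref{defn:projective_up_to_grade} for $M$: a f.g.\ projective $P$, an epi $\pi:P\twoheadrightarrow M$ with $A:=\ker\pi$, and a submodule $B\leq P$ with $A\cap B=0$ and $T:=P/(A+B)$ of grade $\geq 2$. I would keep the \emph{same} $P$ and $B$, and replace $\pi$ by the composite $\pi':P\xrightarrow{\pi}M\twoheadrightarrow M'$, whose kernel is $A':=\pi^{-1}(\tor M)\supseteq A$; thus $P/A'\cong M'$ and $A'/A\cong\tor(M)$. The next step is to verify that $B$ is still a complement, i.e.\ $A'\cap B=0$, and this is exactly where torsion-freeness enters: if $x\in A'\cap B$ then $\pi(x)\in\tor(M)$, so $rx\in A$ for some $r\neq 0$; since also $rx\in B$ we get $rx\in A\cap B=0$, and as $P$ (hence its submodule $B$) is torsion-free by Remark~\ref{rmrk:Q_not_torsionless}, $x=0$.

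With $A'\cap B=0$ and $A\subseteq A'$ in hand, the modular law gives $A'\cap(A+B)=A+(A'\cap B)=A$, so the second isomorphism theorem identifies the kernel of the natural epi $T=P/(A+B)\twoheadrightarrow P/(A'+B)=:T''$ as
\[
  K:=(A'+B)/(A+B)\cong A'/\bigl(A'\cap(A+B)\bigr)=A'/A\cong\tor(M),
\]
a torsion module. It then remains to confirm that $T''$ still has grade $\geq 2$, which is the crux of the argument. Applying $\Hom(-,R)$ to $0\to K\to T\to T''\to 0$ produces the long exact sequence whose relevant segment is
\[
  \Hom(T,R)\to\Hom(K,R)\to\Ext^1(T'',R)\to\Ext^1(T,R).
\]
The outer terms vanish because $\grade T\geq 2$, and $\Hom(K,R)=0$ because $K\cong\tor(M)$ is torsion while $R$ is a domain, so no nonzero map goes from a torsion module into the torsion-free $R$; likewise $\Hom(T'',R)\hookrightarrow\Hom(T,R)=0$. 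Hence $\Ext^i(T'',R)=0$ for $i=0,1$, i.e.\ $\grade T''\geq 2$.

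Putting it together, $M'=P/A'$ is projective up to grade $2$ via the data $(P,B,T'')$, and being torsionless it is projective by Theorem~\ref{coro:main1}; the finiteness requirement of Definition~\ref{defn:projective_up_to_grade} is met since $P$ and $M'$ are f.g. The main obstacle I anticipate is precisely the grade-preservation step: one must be sure that passing from $A$ to the larger $A'=\pi^{-1}(\tor M)$ keeps the intersection with $B$ trivial and keeps the quotient of grade $\geq 2$. Both follow from the single observation that the new kernel is the torsion module $\tor(M)$, which interacts with the torsion-free $P$ and with $\Hom(-,R)$ exactly as needed.
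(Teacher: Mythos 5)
Your proof is correct and follows essentially the same route as the paper: enlarge $A$ to $A' = \pi^{-1}(\tor(M))$, check $A' \cap B = 0$ via the torsion/torsion-free interplay, show the factor $T'' = P/(A'+B)$ of $T$ retains grade $\geq 2$ by the long exact $\Ext(-,R)$-sequence, and apply Theorem~\ref{coro:main1} to $M/\tor(M) \cong P/A'$. The only cosmetic difference is that you identify the kernel of $T \twoheadrightarrow T''$ directly as $\tor(M)$, which makes its torsionness automatic, whereas the paper invokes a standalone proposition on factor modules of torsion modules of grade $\geq 2$ (whose hypothesis is met via the corollary that f.g.\ modules of grade $\geq 1$ are torsion).
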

\begin{proof}
  Let  $P \stackrel{\pi}{\twoheadrightarrow} M$ be the f.g.~projective module of Definition~\ref{defn:projective_up_to_grade}, $A := \ker \pi \leq P$, $B$ be a complement up to the grade $2$ of $A$ in $P$, and $T := P / (A + B)$ the factor module of grade $\geq 2$.
  Let $A'$ denote the preimage of $\tor(M)$ in $P$, so $A'/A \cong \tor(M)$.
  The intersection $A' \cap B = 0$ since $A' \cap (A+B) = A$.
  The latter can be seen as follows:
  Otherwise $(A' \cap (A+B)) / A \leq A'/A \cong \tor(M)$ would be a nontrivial torsion\footnote{Here we need that $\tor(M)$ is torsion and not merely having a zero evaluation map.} submodule of the torsion-free factor $(A+B) / A \cong B$.
  The next proposition guarantees that the epimorphic image $T''=P / (A' + B)$ of $T$ is again of grade at least $2$.
  It remains to apply Theorem~\ref{coro:main1} to $M / \tor(M) \cong P / A'$ with $B$ now a complement of $A'$ in $P$ up to grade at least $2$.
\end{proof}

\begin{prop}
  Let $T$ be a torsion module  over a domain. If $T$ has grade at least $2$ then any of its factor modules has grade at least $2$.
\end{prop}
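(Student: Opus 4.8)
The plan is to reduce the claim to one elementary fact—that a torsion module over a domain has vanishing $R$-dual—and then to extract the vanishing of $\Ext^1$ directly from the long exact sequence attached to the factor module.

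First I would isolate the observation that if $U$ is any torsion module over a domain $R$ then $\Hom(U,R)=0$: given $\phi\in\Hom(U,R)$ and $u\in U$, choose $0\neq r\in R$ with $ru=0$ (resp.\ $ur=0$); then $r\phi(u)=\phi(ru)=0$ (resp.\ $\phi(u)r=\phi(ur)=0$), and since $R$ is a domain this forces $\phi(u)=0$. Thus every torsion module already has grade at least $1$, and this is the only place where the domain hypothesis is used.

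Next I would write an arbitrary factor module as $\overline{T}=T/U$ for a submodule $U\leq T$ and consider the short exact sequence $0\to U\to T\to \overline{T}\to 0$. Since $T$ is torsion, each of its elements—and hence each element of the submodule $U$ and of the quotient $\overline{T}$—is torsion, so the previous step gives $\Hom(U,R)=0$ and $\Hom(\overline{T},R)=0$; the latter already settles grade at least $1$ for $\overline{T}$. The hypothesis $\grade T\geq 2$ means exactly $\Hom(T,R)=0=\Ext^1(T,R)$. Applying the contravariant long exact $\Ext(-,R)$-sequence to the short exact sequence yields the segment
\[
  \underbrace{\Hom(U,R)}_{=0}\longrightarrow \Ext^1(\overline{T},R)\longrightarrow \underbrace{\Ext^1(T,R)}_{=0},
\]
which squeezes $\Ext^1(\overline{T},R)$ between two zeros and forces it to vanish. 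Together with $\Hom(\overline{T},R)=0$ this is precisely $\grade \overline{T}\geq 2$.

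I do not anticipate a genuine obstacle: the argument is essentially a one-line diagram chase once the dual-vanishing lemma is in place. The only point requiring mild care is the symmetric left/right bookkeeping in the computation $\Hom(U,R)=0$; note in particular that no \textsc{Noether}ian or \textsc{Ore} hypothesis is needed here, since $T$ being torsion already makes every sub- and factor module torsion elementwise.
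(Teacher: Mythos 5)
Your proof is correct and follows essentially the same route as the paper: both establish that the torsion submodule $T'=U$ has vanishing dual $\Hom(U,R)=0$ (since every morphism from a torsion module over a domain into $R$ is zero) and then read off $\Ext^1(T/U,R)=0$ from the segment $\Hom(U,R)\to\Ext^1(T/U,R)\to\Ext^1(T,R)$ of the long exact $\Ext(-,R)$-sequence. The only cosmetic difference is that you obtain $\Hom(T/U,R)=0$ directly from the factor module being torsion, whereas the paper extracts it from the same long exact sequence via the embedding $\Hom(T/U,R)\hookrightarrow\Hom(T,R)=0$.
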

\begin{proof}
  The grade condition for $T$ means that $\Hom(T,R)=0=\Ext^1(T,R)$.
  Let $T'' = T/T'$ be a factor of $T$.
  Any morphism from a torsion module over a domain into a torsion-free module is zero.
  An since the submodule $T'$ is again torsion\footnote{Here we need that $T$ is torsion and not merely having a zero evaluation map.} it follows that $\Hom(T',R)=0$.
  The long exact $\Ext(-,R)$-sequence (w.r.t.~$0 \to T' \to T \to T'' \to 0$)
  \[
    0 \to \Hom(T'',R) \to \underbrace{\Hom(T,R)}_{=0} \to \underbrace{\Hom(T',R)}_{=0} \to \Ext^1(T'',R) \to \underbrace{\Ext^1(T,R)}_{=0}
  \]
  implies that $\Hom(T'',R) = 0 = \Ext^1(T'',R)$.
\end{proof}

We end this section by describing a context in which the original formulation can be retained.
If $M$ has a finite free resolution, e.g., if $R$ is an FFR ring\footnote{Finite free resolution ring.}, then, by a remark of \textsc{Serre}, $M$ projective implies $M$ stably free (cf.~\cite[Proposition~19.16]{eis}).
If, additionally, $R$ is \textsc{Hermite} then $M$ projective already implies $M$ free.
If $R$ is commutative \textsc{Cohen-Macaulay} then the notions of grade and codimension coincide (cf.~Remark~\ref{rmrk:CM}).
The rings mentioned in the Introduction are FFR, \textsc{Hermite}, and commutative \textsc{Cohen-Macaulay} (even regular) domains.

\begin{rmrk}
It should be noted that this paper is less of computational interest as non of the results suggests an algorithm to decide the projectivity of the torsion-free factor of a given finitely presented module.
For an overview on algorithms to test projectivity, stably freeness, and freeness see \cite[Subsection~3.4]{BL} and the references therein.
However, given $M=R^q/A$ and $B \leq R^q$ over a computable ring $R$ it can be algorithmically decided whether $A \cap B = 0$ and $\grade T \geq 2$ for $T=R^q/(A+B)$.
For the definition of a computable ring see \cite[Definition~3.2]{BL}.
The torsion-free factor over finitely presented modules over such rings can be computed, e.g., as the coimage of the evaluation map.
\end{rmrk}

\appendix

\section{A converse of Lemma~\ref{lemm:key}} \label{sec:converse_key_lemma}

%%%%%%
%\def\objZZ{A}
%\def\objYY{S}
%\def\objYZ{B}
%\def\objXX{P}
%\def\objXZ{M}
%\def\objXY{T}
%%%%%%%
\def\objZZ{B}
\def\objYY{S}
\def\objYZ{A}
\def\objXX{P}
\def\objXZ{N}
\def\objXY{T}

\begin{wrapfigure}[8]{r}{4cm}
\vskip -0.9cm
\begin{minipage}{4cm}
\[
  \xymatrix@=0.2cm{
  &
  &
  0
  \ar[dr]
  &
  &
  0
  \ar[dl]
  \\
  &
  0
  \ar[dr]
  &
  &
  \objZZ
  \ar@/^1.4pc/[dd]
  \ar[dl]
  \\
  0
  \ar[dr]
  &
  &
  \objYY
  \ar[dl]
  \ar[dr]
  \\
  &
  \objYZ
  \ar[dl]
  \ar[dr]
  &
  &
  \objXX
  \ar@/^1.4pc/[dd]
  \ar[dl]
  \\
  0
  &
  &
  \objXZ
  \ar[dl]
  \ar[dr]
  \\
  &
  0
  &
  &
  \objXY
  \ar[dl]
  \ar[dr]
  \\
  &
  &
  0
  &
  &
  0
  }
\]
\end{minipage}
\end{wrapfigure}
Let $\A$ be an \textsc{Abelian} category and $P \cong M \times_T N \in \A$ a fiber product of two objects $M$ and $N$ over a common factor object $T$.
Again we set $A := \ker(P \twoheadrightarrow M)$, $B := \ker(P \twoheadrightarrow N)$, and $S := A + B$.

The four factors
\begin{center}$
  \underbrace{\objXX/\objYY}_{\cong \objXY} \cong (\underbrace{\objXX/\objZZ}_{\cong \objXZ})/(\underbrace{\objYY/\objZZ}_{\cong \objYZ})
$\end{center}
in the first isomorphism theorem applied to $B \leq S \leq P$ can be expressed by four commuting short exact sequences yielding the diagram on the right.

\bigskip
We now formulate the converse of Lemma~\ref{lemm:key} under the assumption that $\Ext^1(T,P)=0$.
\begin{prop} \label{prop:converse_of_key_lemma}
    Under the assumption that $\Ext^1(T,P)=0$ the following two conditions become equivalent:
  \begin{enumerate}
    \item The extension $0 \to A \to N \to T \to 0$ is trivial. \label{prop:converse_of_key_lemma.seq}
    \item $\Ext^1(T,A)=0$. \label{prop:converse_of_key_lemma.ext}
  \end{enumerate}
\end{prop}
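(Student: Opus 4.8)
The plan is to settle the easy direction at once and then route the converse through the long exact $\Ext(T,-)$-sequence. The implication \eqref{prop:converse_of_key_lemma.ext}$\Rightarrow$\eqref{prop:converse_of_key_lemma.seq} is immediate and does not even use the standing hypothesis $\Ext^1(T,P)=0$: the extension $0 \to A \to N \to T \to 0$ determines a class in $\Ext^1(T,A)$, so if the whole group vanishes then that class vanishes, i.e.\ the extension is trivial. Everything of substance lies in the reverse implication \eqref{prop:converse_of_key_lemma.seq}$\Rightarrow$\eqref{prop:converse_of_key_lemma.ext}.

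For that direction I would first apply the covariant functor $\Ext(T,-)$ to the defining sequence $0 \to A \to P \to M \to 0$. The relevant fragment reads
\[
  \Hom(T,M) \xrightarrow{\ \delta\ } \Ext^1(T,A) \to \Ext^1(T,P) = 0 \mbox{,}
\]
so the hypothesis $\Ext^1(T,P)=0$ forces the connecting homomorphism $\delta$ to be an epimorphism. It therefore suffices to prove that $\delta$ is the \emph{zero} map, for then its image $\Ext^1(T,A)$ is trivial.

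To exhibit $\delta = 0$ I would invoke naturality of the connecting homomorphism along the morphism of short exact sequences supplied by the subdirect structure,
\[
\begin{array}{ccccccccc}
0 & \to & A & \to & P & \to & M & \to & 0 \\
  &     & \| &     & \downarrow p &  & \downarrow q &  & \\
0 & \to & A & \to & N & \to & T & \to & 0 \mbox{,}
\end{array}
\]
whose middle vertical is the projection $p\colon P \twoheadrightarrow N$ and whose right vertical is the given epi $q\colon M \twoheadrightarrow T$. Naturality yields $\delta = \delta' \circ q_*$, where $q_*\colon\Hom(T,M)\to\Hom(T,T)$ is post-composition with $q$ and $\delta'\colon\Hom(T,T)\to\Ext^1(T,A)$ is the connecting map of the bottom row. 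Under assumption \eqref{prop:converse_of_key_lemma.seq} the bottom row splits; since $\Hom(T,-)$ preserves split exactness, $\delta'$ vanishes identically, whence $\delta = \delta'\circ q_* = 0$ and the proof concludes.

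The main obstacle is setting up that morphism of short exact sequences correctly, i.e.\ checking that the bottom row genuinely \emph{is} $0 \to A \to N \to T \to 0$ and that its right-hand square commutes. Here I would reuse the data already organized by the first isomorphism theorem applied to $B \leq S \leq P$: the composite $A \hookrightarrow P \xrightarrow{p} N$ has kernel $A \cap B = 0$, so it embeds $A \cong S/B$ into $N \cong P/B$ with quotient $P/S \cong T$; and both composites $P \to M \to T$ and $P \to N \to T$ coincide with the canonical map $P \twoheadrightarrow P/S \cong T$ by the defining property of the pullback, giving commutativity of the right square (the left square commutes trivially on $A$). Once this diagram is in place, the naturality argument runs through formally.
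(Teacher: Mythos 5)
Your proof is correct, and it routes the nontrivial implication differently from the paper's. Both arguments start from the same observation: the splitting in condition (1) forces the connecting homomorphism $\delta'\colon\Hom(T,T)\to\Ext^1(T,A)$ of $0 \to A \to N \to T \to 0$ to vanish. From there the paper stays inside the braid diagram built from $B \leq S \leq P$ and never mentions $M$: vanishing of $\delta'$ gives an embedding $\Ext^1(T,A)\hookrightarrow\Ext^1(T,N)$; the map $\phi\colon\Ext^1(T,S)\to\Ext^1(T,N)$ is rewritten, via $S=A\oplus B$ and additivity of $\Ext^1(T,-)$, as $\Ext^1(T,A)\oplus\Ext^1(T,B)\twoheadrightarrow\Ext^1(T,A)\hookrightarrow\Ext^1(T,N)$, so that $\img\phi\cong\Ext^1(T,A)$; and $\phi=0$ because $S\to N$ factors through $P$ with $\Ext^1(T,P)=0$. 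You instead bring $M$ into play: the fragment $\Hom(T,M)\xrightarrow{\ \delta\ }\Ext^1(T,A)\to\Ext^1(T,P)=0$ of the long exact sequence of $0\to A\to P\to M\to 0$ makes $\delta$ an epimorphism, and naturality of the connecting map along the morphism $(\mathrm{id}_A,p,q)$ of short exact sequences gives $\delta=\delta'\circ q_*=0$, hence $\Ext^1(T,A)=0$. Your route is more economical---no direct-sum decomposition, no additivity of $\Ext$, just one naturality square---and it mirrors the paper's own proof of Theorem~\ref{thm:main1}, which uses the identical fragment but kills $\delta$ via the hypothesis $\Hom(T,M)=0$ rather than via $\delta'=0$; what the paper's version buys is that it runs entirely inside the symmetric braid diagram that the appendix had already set up for this purpose. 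The one point in your argument that genuinely needed care---that $0\to A\to N\to T\to 0$ with $A\to N$ the restriction of $p$ really is exact, and that the right-hand square commutes by the defining property of the pullback---is exactly the point you verify at the end, and your verification is correct.
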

\begin{proof}
For the nontrivial implication \eqref{prop:converse_of_key_lemma.seq} $\implies$ \eqref{prop:converse_of_key_lemma.ext} consider the braid diagram below.
Condition~\eqref{prop:converse_of_key_lemma.seq} implies that the connecting homomorphism $\Hom(T,T) \to \Ext^1(T,A)$ is zero, i.e., that $\Ext^1(T,A)$ embeds into $\Ext^1(T,N)$.
The homomorphism $\phi: \Ext^1(T,S) \to \Ext^1(T,N)$ can be written as the composition $\Ext^1(T,S) = \Ext^1(T,A+B) \cong \Ext^1(T,A) + \Ext^1(T,B) \twoheadrightarrow \Ext^1(T,A) \hookrightarrow \Ext^1(T,N)$, showing that the image of $\phi$ is isomorphic to $\Ext^1(T,A)$.
But $\phi$ factors through $\Ext^1(T,P) = 0$ and is hence zero, together with its image $\Ext^1(T,A)$.

\def\contraobject{T}
\[
  \xymatrix@R=0.25cm@C=0cm{
  &
  &
  0
  \ar[dr]
  &
  &
  0
  \ar[dl]
  \\
  &
  0
  \ar[dr]
  &
  &
  \Hom(\contraobject,\objZZ)
  \ar@/^2.9pc/[dd]
  \ar[dl]
  &
  \mbox{\phantom{Hom(X)}}
  \\
  0
  \ar[dr]
  &
  &
  \Hom(\contraobject,\objYY)
  \ar[dl]
  \ar[dr]
  \\
  &
  \Hom(\contraobject,\objYZ)
  \ar@/_2.9pc/[dd]
  \ar[dr]
  &
  &
  \Hom(\contraobject,\objXX)
  \ar@/^2.9pc/[dd]
  \ar[dl]
  \\
  \mbox{\phantom{Hom(X)}}
  &
  &
  \Hom(\contraobject,\objXZ)
  \ar[dl]
  \ar[dr]
  \\
  &
  \Ext^1(\contraobject,\objZZ)
  \ar[dr]
  \ar@/_2.9pc/[dd]
  &
  &
  \Hom(\contraobject,\objXY)
  \ar[dl]
  \ar@/^2.9pc/[dd]
  \\
  &
  &
  \Ext^1(\contraobject,\objYY)
  \ar[dl]
  \ar[dr]
  \\
  &
  \Ext^1(\contraobject,\objXX)
  \ar[dr]
  \ar@/_2.9pc/[dd]
  &
  &
  \Ext^1(\contraobject,\objYZ)
  \ar[dl]
  \ar@/^2.9pc/[dd]
  \\
  &
  &
  \Ext^1(\contraobject,\objXZ)
  \ar[dl]
  \ar[dr]
  \\
  &
  \Ext^1(\contraobject,\objXY)
  &
  &
  \Ext^2(\contraobject,\objZZ)
  }
\]
\end{proof}

\section*{Acknowledgments}
I would like to thank \textsc{Diego Napp Avelli} for communicating the problem to me and the referees for various improvement suggestions.
I am very grateful to \textsc{Alban Quadrat} who showed us how to do homological algebra in a constructive way.
His influence is everywhere in this work.

\def\cprime{$'$} \def\cprime{$'$} \def\cprime{$'$} \def\cprime{$'$}
\providecommand{\bysame}{\leavevmode\hbox to3em{\hrulefill}\thinspace}
\providecommand{\MR}{\relax\ifhmode\unskip\space\fi MR }
% \MRhref is called by the amsart/book/proc definition of \MR.
\providecommand{\MRhref}[2]{%
  \href{http://www.ams.org/mathscinet-getitem?mr=#1}{#2}
}
\providecommand{\href}[2]{#2}

%\newpage
%\printindex


\begin{thebibliography}{Thpa13}

\bibitem[AB69]{AB}
Maurice Auslander and Mark Bridger, \emph{Stable module theory}, Memoirs of the
  American Mathematical Society, No. 94, American Mathematical Society,
  Providence, R.I., 1969. \MR{MR0269685 (42 \#4580)}

\bibitem[Aus66]{A}
Maurice Auslander, \emph{Coherent functors}, Proc. Conf. Categorical Algebra
  (La Jolla, Calif., 1965), Springer, New York, 1966, pp.~189--231.
  \MR{MR0212070 (35 \#2945)}

\bibitem[BH93]{BrunsHerzog}
Winfried Bruns and J{\"u}rgen Herzog, \emph{Cohen-{M}acaulay rings}, Cambridge
  Studies in Advanced Mathematics, vol.~39, Cambridge University Press,
  Cambridge, 1993. \MR{1251956 (95h:13020)}

\bibitem[BLH11]{BL}
Mohamed Barakat and Markus Lange-Hegermann, \emph{An axiomatic setup for
  algorithmic homological algebra and an alternative approach to localization},
  J. Algebra Appl. \textbf{10} (2011), no.~2, 269--293,
  (\href{http://arxiv.org/abs/1003.1943}{\texttt{arXiv:1003.1943}}).
  \MR{2795737 (2012f:18022)}

\bibitem[BLH13]{BL_SerreQuotients}
Mohamed Barakat and Markus Lange-Hegermann, \emph{Characterizing {S}erre
  quotients with no section functor and applications to coherent sheaves},
  Appl. Categor. Struct. (2013), published online,
  (\href{http://arxiv.org/abs/1210.1425}{\texttt{arXiv:1210.1425}}).

\bibitem[CQR05]{CQR05}
F.~Chyzak, A.~Quadrat, and D.~Robertz, \emph{Effective algorithms for
  parametrizing linear control systems over {O}re algebras}, Appl. Algebra
  Engrg. Comm. Comput. \textbf{16} (2005), no.~5, 319--376,
  {(\url{http://www-sop.inria.fr/members/Alban.Quadrat/PubsTemporaire/AAECC.pdf})}.
  \MR{MR2233761 (2007c:93041)}

\bibitem[CQR07]{CQR07}
F.~Chyzak, A.~Quadrat, and D.~Robertz, \emph{Ore{M}odules: a symbolic package
  for the study of multidimensional linear systems}, Applications of time delay
  systems, Lecture Notes in Control and Inform. Sci., vol. 352, Springer,
  Berlin, 2007, {(\url{http://wwwb.math.rwth-aachen.de/OreModules})},
  pp.~233--264. \MR{MR2309473}

\bibitem[Eis95]{eis}
D.~Eisenbud, \emph{Commutative algebra}, Graduate Texts in Mathematics, vol.
  150, Springer-Verlag, New York, 1995, With a view toward algebraic geometry.
  \MR{MR1322960 (97a:13001)}

\bibitem[Fli90]{Fl}
M.~Fliess, \emph{Some basic structural properties of generalized linear
  systems}, Systems Control Lett. \textbf{15} (1990), no.~5, 391--396.
  \MR{MR1084580 (91j:93018)}

\bibitem[FO98]{FO98}
Stefan Fr{\"o}hler and Ulrich Oberst, \emph{Continuous time-varying linear
  systems}, Systems Control Lett. \textbf{35} (1998), no.~2, 97--110.
  \MR{1654885 (99h:93006)}

\bibitem[HS97]{HS}
P.~J. Hilton and U.~Stammbach, \emph{A course in homological algebra}, second
  ed., Graduate Texts in Mathematics, vol.~4, Springer-Verlag, New York, 1997.
  \MR{MR1438546 (97k:18001)}

\bibitem[Ish64]{TI64}
Takeshi Ishikawa, \emph{Faithfully exact functors and their applications to
  projective modules and injective modules}, Nagoya Math. J. \textbf{24}
  (1964), 29--42. \MR{0169888 (30 \#131)}

\bibitem[Lam99]{Lam99}
T.~Y. Lam, \emph{Lectures on modules and rings}, Graduate Texts in Mathematics,
  vol. 189, Springer-Verlag, New York, 1999. \MR{MR1653294 (99i:16001)}

\bibitem[Lam06]{Lam}
T.~Y. Lam, \emph{Serre's problem on projective modules}, Springer Monographs in
  Mathematics, Springer-Verlag, Berlin, 2006. \MR{MR2235330 (2007b:13014)}

\bibitem[NAR10]{NR10}
D.~Napp~Avelli and P.~Rocha, \emph{Autonomous multidimensional systems and
  their implementation by behavioral control}, Systems Control Lett.
  \textbf{59} (2010), no.~3-4, 203--208. \MR{2642258 (2011c:93077)}

\bibitem[Obe90]{OB}
Ulrich Oberst, \emph{Multidimensional constant linear systems}, Acta Appl.
  Math. \textbf{20} (1990), no.~1-2, 1--175. \MR{MR1078671 (92f:93007)}

\bibitem[PQ99]{PQ99}
J.~F. Pommaret and A.~Quadrat, \emph{Algebraic analysis of linear
  multidimensional control systems}, IMA J. Math. Control Inform. \textbf{16}
  (1999), no.~3, 275--297. \MR{MR1706658 (2000f:93027)}

\bibitem[Rob06]{robphd}
D.~Robertz, \emph{{Formal Computational Methods for Control Theory}}, Ph.D.
  thesis, RWTH Aachen, Germany, June 2006, This thesis is available at
  \url{http://darwin.bth.rwth-aachen.de/opus/volltexte/2006/1586}.

\bibitem[Ste75]{Sten75}
Bo~Stenstr{\"o}m, \emph{Rings of quotients}, Springer-Verlag, New York, 1975,
  Die Grundlehren der Mathematischen Wissenschaften, Band 217, An introduction
  to methods of ring theory. \MR{0389953 (52 \#10782)}

\bibitem[Thpa13]{homalg-project}
\phantom{.} The~homalg~project authors, \emph{The $\mathtt{homalg}$ project --
  {A}lgorithmic {H}omological {A}lgebra},
  (\url{http://homalg.math.rwth-aachen.de/}), 2003--2013.

\bibitem[Zer06]{Zer06}
Eva Zerz, \emph{An algebraic analysis approach to linear time-varying systems},
  IMA J. Math. Control Inform. \textbf{23} (2006), no.~1, 113--126. \MR{2212264
  (2006m:93037)}

\end{thebibliography}
\end{document}